\newtheorem{theorem}{Theorem}[section]
\newtheorem{proposition}[theorem]{Proposition}
\newtheorem{lemma}[theorem]{Lemma}
\newtheorem{remark}[theorem]{Remark}
\def\diam{\mathrm{diam}}
\def\Lipo{\mathrm{Lip}_0}
\def\Lip{\mathrm{Lip}}
\begin{document}

\title{Isometric composition operators on Lipschitz spaces}
\author{A. Jim\'{e}nez-Vargas}
\address{Departamento de Matem\'{a}ticas, Universidad de Almer\'{i}a, 04120 Almer\'{i}a, Spain}
\email{ajimenez@ual.es}
\date{Accepted for publication in Mediterranean Journal of Mathematics and it will appear in 2020.}

\subjclass[2010]{47B33, 47B38}
\keywords{Composition operator; Lipschitz function; isometry; peak property.}

\begin{abstract}
Given pointed metric spaces $X$ and $Y$, we characterize the basepoint-preserving Lipschitz maps $\phi$ from $Y$ to $X$ inducing an isometric composition operator $C_\phi$ between the Lipschitz spaces $\Lipo(X)$ and $\Lipo(Y)$, whenever $X$ enjoys the peak property. This gives an answer to a question posed by N. Weaver in his book [Lipschitz algebras. Second edition. World Scientific Publishing Co. Pte. Ltd., Hackensack, NJ, 2018]. 
\end{abstract}
\maketitle

\section{Introduction}

Let $(X,d)$ be a pointed metric space with a basepoint designated by $e_X$, and let $\widetilde{X}$ denote the set 
$$
\left\{(x,y)\in X\times X\colon x\neq y\right\}.
$$
The Lipschitz space $\Lipo(X)$ is the Banach space of all Lipschitz functions $f\colon X\to\mathbb{R}$ with $f(e_X)=0$, under the Lipschitz norm: 
$$
\Lip(f)=\sup\left\{\frac{\left|f(x)-f(y)\right|}{d(x,y)}\colon (x,y)\in\widetilde{X}\right\}.
$$
Throughout the paper, unless specified otherwise, $X$ and $Y$ will denote two pointed metric spaces. Every Lipschitz map $\phi$ from $Y$ to $X$ which preserves the basepoint produces a bounded composition operator $C_\phi$ from $\Lipo(X)$ to $\Lipo(Y)$, defined by $C_\phi f=f\circ\phi$ for all $f\in\Lipo(X)$. The map $\phi$ is known as the symbol of the operator $C_\phi$. 

The problem of characterizing those symbols $\phi$ which induce isometric composition operators $C_\phi$ (not necessarily surjective) has been raised recently by N. Weaver in \cite[p. 53]{Wea-18}. The same question was addressed by some authors for isometric composition operators on Banach spaces of analytic functions (see \cite{MarVuk-07} and the papers that cite it). 

In the surjective case, N. Weaver proved in \cite[Proposition 2.28 (iii)]{Wea-18} that those basepoint-preserving Lipschitz symbols $\phi$ from $Y$ to $X$ that generate surjective isometric composition operators $C_\phi$ from $\Lipo(X)$ to $\Lipo(Y)$ are precisely the surjective isometries from $Y$ to $X$, whenever $X$ and $Y$ are complete.

A description of all linear isometries (surjective or not) of $\Lipo(X)$ does not seem to be known. Given $\alpha\in ]0,1[$, we denote by $X^\alpha$ the metric space $(X,d^\alpha)$. If $X$ and $Y$ are compact, the linear isometries from $\Lipo(X^\alpha)$ onto $\Lipo(Y^\alpha)$ were characterized by E. Mayer-Wolf in \cite[Theorem 3.3]{May-81}. He showed that a linear operator $T\colon\Lipo(X^\alpha)\to\Lipo(Y^\alpha)$ is a surjective isometry if and only if it is of the form 
$$
T(f)(y)=\lambda k^{-\alpha}\left(f(\phi(y))-f(\phi(e_Y))\right)
$$
for all $f\in\Lipo(X^\alpha)$ and $y\in Y$, where $\lambda\in\mathbb{R}$ with $|\lambda|=1$ and $\phi\colon Y\to X$ is a bijective $k$-dilation with $\diam(X)=k\cdot\diam(Y)$. Given $k>0$, a map $\phi\colon Y\to X$ is a $k$-dilation if $d_X(\phi(x),\phi(y))=k\cdot d_Y(x,y)$ for all $x,y\in Y$. Mayer-Wolf's result was extended by N. Weaver for surjective linear isometries from $\Lipo(X)$ to $\Lipo(Y)$, when $X$ and $Y$ are complete and uniformly concave \cite[Theorem 3.56]{Wea-18}.  

According to \cite[Definition 3.33]{Wea-18}, a metric space $X$ is said to be concave if 
$$
d(x,y)<d(x,z)+d(z,y)
$$
for any triple of distinct points $x,y,z\in X$, and uniformly concave if for every distinct points $x,y\in X$ and every $\varepsilon>0$, there exists $\delta>0$ such that 
$$
d(x,y)\leq d(x,z)+d(z,y)-\delta
$$
for all $z\in X$ such that $d(x,z),d(y,z)\geq\varepsilon$. The class of uniformly concave metric spaces includes any closed subset of $\mathbb{R}^n$ with the Euclidean norm or any compact subset of a strictly convex Banach space both without colinear triples, the unit sphere of any uniformly convex Banach space and H\"older spaces, among others (see \cite[Section 3.5]{Wea-18}). 

Uniform concavity is closely related to the extremal structure of the closed unit ball $B_{\mathcal{F}(X)}$ of the Lipschitz free Banach space 
$$
\mathcal{F}(X):=\overline{\mathrm{lin}}\left\{\delta_x\colon x\in X\right\}\subset\Lipo(X)^*,
$$
where $\delta_x(f):=f(x)$ for every $x\in X$ and $f\in\Lipo(X)$. Let us recall that $\mathcal{F}(X)$ is the canonical predual of $\Lipo(X)$. By Theorems 3.39 in \cite{Wea-18} and 4.1 in \cite{AliGui-19}, $X$ is uniformly concave if and only if every molecule $(\delta_x-\delta_y)/d(x,y)$ is a preserved extreme point of $B_{\mathcal{F}(X)}$. 

The notion of peaking function has been a very important tool in the study of the isometric theory of Lipschitz spaces. According to \cite[Definition 2.4.1]{Wea-99}, a function $f\in\Lipo(X)$ with $\Lip(f)\leq 1$ is said to peak at $(x,y)\in\widetilde{X}$ if 
$$
\frac{f(x)-f(y)}{d(x,y)}=1,
$$
and for each open set $U\subset\widetilde{X}$ containing $(x,y)$ and $(y,x)$, there exists $\delta>0$ such that 
$$
\frac{\left|f(z)-f(w)\right|}{d(z,w)}\leq 1-\delta
$$
for all $(z,w)\in\widetilde{X}\setminus U$. Colloquially, if $f$ peaks at $(x,y)\in\widetilde{X}$, we have that $|f(z)-f(w)|/d(z,w)$ is uniformly less than 1 when $(z,w)$ is away from $(x,y)$ and $(y,x)$.

We say that a pointed metric space $X$ has the peak property if for every $(x,y)\in\widetilde{X}$, there is a function $f\in\Lipo(X)$ with $\Lip(f)\leq 1$ that peaks at $(x,y)$. H\"older spaces constitute a class of metric spaces with the peak property (see the proof of Proposition 2.4.5 in \cite{Wea-99}). By \cite[Theorem 5.4]{GarProRue-18}, $X$ has the peak property if and only if every molecule $(\delta_x-\delta_y)/d(x,y)$ is a strongly exposed point of $B_{\mathcal{F}(X)}$. 

In this note, we characterize all basepoint-preserving Lipschitz maps $\phi$ from $Y$ to $X$ whose induced composition operators $C_\phi$ from $\Lipo(X)$ to $\Lipo(Y)$ are isometries, whenever $X$ has the peak property. We also give a condition for $\phi$ to induce an isometric composition operator $C_\phi$ without any restriction on $X$.

\section{The results}

Let us recall that a map $\phi\colon Y\to X$ is nonexpansive if $d_X(\phi(x),\phi(y))\leq d_Y(x,y)$ for all $x,y\in Y$. 
A nonexpansive map $\phi\colon Y\to X$ which preserves the basepoint can induce or not an isometric composition operator $C_\phi\colon\Lipo(X)\to\Lipo(Y)$. 
For example, each $k$-dilation $\phi\colon Y\to X$ with $k\in ]0,1]$ is nonexpansive and if, in addition, $\phi\colon Y\to X$ is nonconstant, preserves the basepoint and has dense range, then $C_\phi\colon\Lipo(X)\to\Lipo(Y)$ is an isometry if and only if $k=1$ (that is, if $\phi$ is an isometry). 

We first give a sufficient condition for a basepoint-preserving Lipschitz map $\phi$ from $Y$ to $X$ to be the symbol of an isometric composition operator from $\Lipo(X)$ to $\Lipo(Y)$.

\begin{theorem}\label{t11}
Let $X$ and $Y$ be pointed metric spaces and let $\phi\colon Y\to X$ be a Lipschitz map which preserves the basepoint. Assume that $\phi$ is nonexpansive and satisfies the property (M): for every point $(x,y)\in\widetilde{X}$, there exists a sequence $\{(x_n,y_n)\}$ in $\widetilde{Y}$ such that $\{\phi(x_n)\}\to x$, $\{\phi(y_n)\}\to y$ and  
$$
\left\{\frac{d_X(\phi(x_n),\phi(y_n))}{d_Y(x_n,y_n)}\right\}\to 1.
$$
Then $C_\phi\colon\Lipo(X)\to\Lipo(Y)$ is an isometry.
\end{theorem}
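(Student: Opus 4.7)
The plan is to prove the two inequalities $\Lip(C_\phi f)\leq \Lip(f)$ and $\Lip(C_\phi f)\geq \Lip(f)$ for every $f\in\Lipo(X)$, and the two directions use the two hypotheses on $\phi$ separately: nonexpansiveness yields the upper bound, and property (M) yields the lower bound.

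For the upper bound, I would take any $(u,v)\in\widetilde{Y}$ and write
$$
\frac{\left|f(\phi(u))-f(\phi(v))\right|}{d_Y(u,v)}=\frac{\left|f(\phi(u))-f(\phi(v))\right|}{d_X(\phi(u),\phi(v))}\cdot\frac{d_X(\phi(u),\phi(v))}{d_Y(u,v)}
$$
whenever $\phi(u)\neq\phi(v)$ (and note the left-hand side is $0$ otherwise). The first factor is at most $\Lip(f)$ and the second is at most $1$ by nonexpansiveness, so taking the supremum over $(u,v)\in\widetilde{Y}$ gives $\Lip(C_\phi f)\leq\Lip(f)$. This direction requires no special hypothesis beyond $\phi$ being nonexpansive and basepoint-preserving, so $C_\phi$ is always well-defined with operator norm at most $1$ in this setting.

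For the lower bound, fix $\varepsilon>0$ and choose $(x,y)\in\widetilde{X}$ with $|f(x)-f(y)|/d_X(x,y)>\Lip(f)-\varepsilon$. Apply property (M) to obtain a sequence $\{(x_n,y_n)\}\subset\widetilde{Y}$ with $\phi(x_n)\to x$, $\phi(y_n)\to y$, and $d_X(\phi(x_n),\phi(y_n))/d_Y(x_n,y_n)\to 1$. Since $x\neq y$, for $n$ large enough we have $\phi(x_n)\neq\phi(y_n)$, so I can write
$$
\frac{|f(\phi(x_n))-f(\phi(y_n))|}{d_Y(x_n,y_n)}=\frac{|f(\phi(x_n))-f(\phi(y_n))|}{d_X(\phi(x_n),\phi(y_n))}\cdot\frac{d_X(\phi(x_n),\phi(y_n))}{d_Y(x_n,y_n)}.
$$
Continuity of $f$ and of the metric yield $f(\phi(x_n))\to f(x)$, $f(\phi(y_n))\to f(y)$ and $d_X(\phi(x_n),\phi(y_n))\to d_X(x,y)>0$, so the first factor converges to $|f(x)-f(y)|/d_X(x,y)$ while the second factor tends to $1$ by hypothesis. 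Passing to the limit gives
$$
\Lip(C_\phi f)\geq\frac{|f(x)-f(y)|}{d_X(x,y)}>\Lip(f)-\varepsilon,
$$
and letting $\varepsilon\to 0$ completes the proof.

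There is no real obstacle here; the only point that needs mild care is verifying $\phi(x_n)\neq\phi(y_n)$ for large $n$ (so that the quotient $|f(\phi(x_n))-f(\phi(y_n))|/d_X(\phi(x_n),\phi(y_n))$ makes sense), which is immediate from $\phi(x_n)\to x$, $\phi(y_n)\to y$ and $x\neq y$. Property (M) is exactly tailored to ensure that even though $\phi$ may fail to be surjective or to have any isolated point realizing the maximal ratio, one can still approximate every ratio $|f(x)-f(y)|/d_X(x,y)$ on $X$ by ratios $|(C_\phi f)(x_n)-(C_\phi f)(y_n)|/d_Y(x_n,y_n)$ on $Y$.
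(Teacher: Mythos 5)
Your proof is correct and follows essentially the same strategy as the paper: nonexpansiveness gives $\Lip(C_\phi f)\le\Lip(f)$, and property (M) combined with the same factorization of the difference quotient gives the reverse inequality. The only cosmetic difference is that you pass to the limit using the continuity of $f$ directly, whereas the paper writes out the equivalent explicit estimate $|f(\phi(x_n))-f(\phi(y_n))|\ge|f(a_m)-f(b_m)|-\Lip(f)\,d_X(a_m,\phi(x_n))-\Lip(f)\,d_X(b_m,\phi(y_n))$; both handle the approximation identically in substance.
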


\begin{proof}
Since $\phi$ is nonexpansive, we have 
$$
\Lip(f\circ\phi)\leq\Lip(f)\Lip(\phi)\leq\Lip(f)
$$
for every $f\in\Lipo(X)$. In order to check the converse inequality, take $f\in\Lipo(X)$. Hence there exists a sequence $\{(a_m,b_m)\}$ in $\widetilde{X}$ such that  
$$
\lim_{m\to\infty}\frac{\left|f(a_m)-f(b_m)\right|}{d_X(a_m,b_m)}=\Lip(f).
$$
Fix $m\in\mathbb{N}$. By assumption we can find a sequence $\{(x_n^{(m)},y_n^{(m)})\}$ in $\widetilde{Y}$ satisfying that 
$$
\lim_{n\to\infty}d_X(\phi(x_n^{(m)}),a_m)=0,\quad \lim_{n\to\infty}d_X(\phi(y_n^{(m)}),b_m)=0\quad
$$
and 
$$
\lim_{n\to\infty}\frac{d_X(\phi(x_n^{(m)}),\phi(y_n^{(m)}))}{d_Y(x_n^{(m)},y_n^{(m)})}=1.
$$
It follows that $\lim_{n\to\infty}d_X(\phi(x_n^{(m)}),\phi(y_n^{(m)}))=d_X(a_m,b_m)>0$ and therefore there exists $p\in\mathbb{N}$ such that $d_X(\phi(x_n^{(m)}),\phi(y_n^{(m)}))>0$ for all $n\geq p$. We have  
\begin{align*}
\Lip(f\circ\phi)
&=\sup_{x\neq y}\frac{\left|f(\phi(x))-f(\phi(y))\right|}{d_Y(x,y)}\\
&\geq\frac{\left|f(\phi(x_n^{(m)}))-f(\phi(y_n^{(m)}))\right|}{d_Y(x_n^{(m)},y_n^{(m)})}\\
&=\frac{\left|f(\phi(x_n^{(m)}))-f(\phi(y_n^{(m)}))\right|}{d_X(\phi(x_n^{(m)}),\phi(y_n^{(m)}))}\frac{d_X(\phi(x_n^{(m)}),\phi(y_n^{(m)}))}{d_Y(x_n^{(m)},y_n^{(m)})}\\
&\geq\frac{d_X(\phi(x_n^{(m)}),\phi(y_n^{(m)}))}{d_Y(x_n^{(m)},y_n^{(m)})}\left(\frac{\left|f(a_m)-f(b_m)\right|}{d_X(\phi(x_n^{(m)}),\phi(y_n^{(m)}))}\right.\\
&\left.-\Lip(f)\frac{d_X(a_m,\phi(x_n^{(m)}))}{d_X(\phi(x_n^{(m)}),\phi(y_n^{(m)}))}
-\Lip(f)\frac{d_X(b_m,\phi(y_n^{(m)}))}{d_X(\phi(x_n^{(m)}),\phi(y_n^{(m)}))}\right)
\end{align*}
for all $n\geq p$, and taking limits as $n\to\infty$, we obtain 
$$
\Lip(f\circ\phi)\geq \frac{\left|f(a_m)-f(b_m)\right|}{d_X(a_m,b_m)}.
$$
Since $m$ was arbitrary, we conclude that
$$
\Lip(f\circ\phi)\geq \lim_{m\to\infty}\frac{\left|f(a_m)-f(b_m)\right|}{d_X(a_m,b_m)}=\Lip(f).
$$
\end{proof} 

Note that there are symbols $\phi$ satisfying the conditions in Theorem \ref{t11}; for example, every basepoint-preserving isometry with dense range $\phi\colon Y\to X$.

In general, we can establish a kind of reciprocal result of Theorem \ref{t11}.

\begin{proposition}\label{prop-inicial}
Let $X$ and $Y$ be pointed metric spaces and let $\phi\colon Y\to X$ be a Lipschitz map which preserves the basepoint. Assume that $C_\phi\colon\Lipo(X)\to\Lipo(Y)$ is an isometry. Then $\phi$ is nonexpansive and has the following additional property: for every point $(x,y)\in\widetilde{X}$, there exists a sequence $\{(x_n,y_n)\}$ in $\widetilde{Y}$ such that $\{\phi(x_n)\}\to x$, $\{\phi(y_n)\}\to y$ and 
$$
\lim_{n\to\infty}\frac{d_X(\phi(x_n),\phi(y_n))}{d_Y(x_n,y_n)}\leq 1.
$$
\end{proposition}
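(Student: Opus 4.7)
The statement splits into two parts: (a) $\phi$ is nonexpansive, and (b) for each $(x,y)\in \widetilde{X}$ the required approximating sequence exists. I would establish them in this order, since (a) supplies the fact that closes out (b).

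For (a), I would test the isometry against a McShane-type function. Given $y_1 \neq y_2$ in $Y$ with $\phi(y_1) \neq \phi(y_2)$ (the other case being trivial), set $f(z) := d_X(z, \phi(y_2)) - d_X(e_X, \phi(y_2))$. Then $f \in \Lipo(X)$ with $\Lip(f) = 1$, and since $C_\phi$ is an isometry, $\Lip(f \circ \phi) = 1$. Evaluating this bound at $(y_1, y_2)$ gives
$$
\frac{d_X(\phi(y_1), \phi(y_2))}{d_Y(y_1, y_2)} = \frac{|f(\phi(y_1)) - f(\phi(y_2))|}{d_Y(y_1, y_2)} \leq 1,
$$
which is precisely the nonexpansiveness inequality.

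The crux of (b) is to prove first that $\phi(Y)$ is dense in $X$. Being an isometry, $C_\phi$ is injective. If $\phi(Y)$ were not dense, I would pick $x_0 \in X \setminus \overline{\phi(Y)}$, noting that necessarily $x_0 \neq e_X$ because $e_X = \phi(e_Y) \in \phi(Y)$, and for any $0 < r < \min\{d_X(x_0, e_X),\, d_X(x_0, \overline{\phi(Y)})\}$ use the bump $f(z) := \max(0, r - d_X(z, x_0))$ to produce a nonzero element of $\Lipo(X)$ with $f \circ \phi \equiv 0$, contradicting injectivity.

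Having established density, I would fix $(x,y) \in \widetilde{X}$ and choose $(x_n), (y_n) \subset Y$ with $\phi(x_n) \to x$ and $\phi(y_n) \to y$. Since $d_X(\phi(x_n), \phi(y_n)) \to d_X(x,y) > 0$, we have $\phi(x_n) \neq \phi(y_n)$, and therefore $x_n \neq y_n$, for all large $n$, yielding $(x_n, y_n) \in \widetilde{Y}$. By (a) the ratios $d_X(\phi(x_n), \phi(y_n))/d_Y(x_n, y_n)$ all lie in $[0,1]$, so passing to a subsequence forces convergence with limit in $[0,1]$, which gives the required inequality. The only step with genuine content is the dense-range argument; the remainder is routine Lipschitz calculus plus a subsequence selection.
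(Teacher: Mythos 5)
Your proof is correct and follows essentially the same route as the paper: nonexpansiveness, then density of $\phi(Y)$ via injectivity of $C_\phi$, then a subsequence extraction on ratios bounded by $1$. The only difference is that where the paper cites Weaver's facts $\left\|C_\phi\right\|=\Lip(\phi)$ and ``$C_\phi$ injective implies dense range,'' you prove both inline with explicit distance and bump functions, which makes the argument self-contained but is not a different method.
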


\begin{proof} 
Clearly, $\left\|C_\phi\right\|\leq 1$, and since $\left\|C_\phi\right\|=\Lip(\phi)$ by \cite[Proposition 2.23]{Wea-18} (completeness of $X$ and $Y$ is not needed to prove this formula in \cite{Wea-18}), it follows that $\phi$ is nonexpansive. In order to show that $\phi$ has the above-cited property, let $(x,y)\in\widetilde{X}$. Note that $C_\phi$ is injective and therefore $\phi(Y)$ is dense in $X$ by \cite[Proposition 2.25 (ii)]{Wea-18} (completeness of $X$ and $Y$ is not necessary to prove this fact). Hence we can take sequences $\{x_n\}$ and $\{y_n\}$ in $Y$ such that $\{\phi(x_n)\}\to x$ and $\{\phi(y_n)\}\to y$. It follows that $\lim_{n\to\infty}d_X(\phi(x_n),\phi(y_n))=d_X(x,y)>0$, hence there exists $p\in\mathbb{N}$ such that $d_X(\phi(x_n),\phi(y_n))>0$ for all $n\geq p$ and thus $d_Y(x_n,y_n)>0$ for all $n\geq p$. Since $d_X(\phi(x_n),\phi(y_n))/d_Y(x_n,y_n)\leq 1$ for all $n\geq p$, taking subsequences if necessary, we obtain that
$$
\lim_{n\to\infty}\frac{d_X(\phi(x_n),\phi(y_n))}{d_Y(x_n,y_n)}\leq 1.
$$
\end{proof} 

We shall next prove that the basepoint-preserving Lipschitz maps $\phi\colon Y\to X$ for which $C_\phi$ is an isometry from $\Lipo(X)$ to $\Lipo(Y)$, are precisely the nonexpansive maps satisfying the property (M), whenever $X$ has the peak property. 

We shall make use of the following sequential characterization of peaking functions. It appears without proof in \cite{GarProRue-18} and we prove it here for completeness. 

\begin{lemma}\label{lema1}\cite{GarProRue-18}
Let $X$ be a pointed metric space, $(x,y)\in\widetilde{X}$ and $f\in\Lipo(X)$ with $\Lip(f)\leq 1$. Then $f$ peaks at $(x,y)$ if and only if 
$$
\frac{f(x)-f(y)}{d(x,y)}=1,
$$
and the following property (P) holds: 
if $\{(x_n,y_n)\}$ is a sequence in $\widetilde{X}$ such that 
$$
\left\{\frac{f(x_n)-f(y_n)}{d(x_n,y_n)}\right\}\to 1,
$$
then $\{x_n\}\to x$ and $\{y_n\}\to y$.
\end{lemma}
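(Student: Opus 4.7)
The plan is a direct sequential argument, exploiting that $\widetilde{X}$ carries the subspace topology from $X\times X$ and that $f$ is continuous (being Lipschitz).

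For the forward implication, I would assume $f$ peaks at $(x,y)$ and take any sequence $\{(x_n,y_n)\}\subset\widetilde{X}$ with $(f(x_n)-f(y_n))/d(x_n,y_n)\to 1$. Suppose, aiming for a contradiction, that $(x_n,y_n)\not\to(x,y)$. Pass to a subsequence (still called $(x_n,y_n)$) along which $d(x_n,x)+d(y_n,y)\geq\varepsilon$ for some $\varepsilon>0$. Either a further subsequence satisfies $d(x_n,y)+d(y_n,x)\to 0$, in which case $x_n\to y$ and $y_n\to x$, so by continuity of $f$ the ratio tends to $(f(y)-f(x))/d(x,y)=-1$, a contradiction; or else a further subsequence stays at distance $\geq\varepsilon'$ from $(y,x)$ too. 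In the latter case set
$$
U=\{(z,w)\in\widetilde{X}:d(z,x)+d(w,y)<\varepsilon/2\}\cup\{(z,w)\in\widetilde{X}:d(z,y)+d(w,x)<\varepsilon'/2\},
$$
which is open and contains both $(x,y)$ and $(y,x)$, and has $(x_n,y_n)\notin U$ for all $n$. The peak property yields $\delta>0$ with $|f(x_n)-f(y_n)|/d(x_n,y_n)\leq 1-\delta$, contradicting the convergence to $1$.

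For the reverse implication, assume the equality $(f(x)-f(y))/d(x,y)=1$ and property (P), and suppose $f$ does not peak at $(x,y)$. Then there exists an open $U\subset\widetilde{X}$ containing $(x,y)$ and $(y,x)$ for which no $\delta>0$ works; choose for each $n$ a point $(z_n,w_n)\in\widetilde{X}\setminus U$ with $|f(z_n)-f(w_n)|/d(z_n,w_n)>1-1/n$. Passing to a subsequence we may assume the signed quotient converges to either $1$ or $-1$. If the limit is $1$, property (P) applied to $\{(z_n,w_n)\}$ forces $z_n\to x$ and $w_n\to y$, so $(z_n,w_n)\to(x,y)\in U$, contradicting $(z_n,w_n)\notin U$ since $U$ is open. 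If the limit is $-1$, apply (P) to $\{(w_n,z_n)\}$ to obtain $(z_n,w_n)\to(y,x)\in U$, again a contradiction.

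The only subtle point is the first direction, where one must rule out the two ways a subsequence can fail to converge to $(x,y)$: genuinely staying away from both $(x,y)$ and $(y,x)$ (handled by constructing a suitable $U$), versus drifting toward $(y,x)$ (handled by continuity, which flips the sign of the ratio). Once this dichotomy is made explicit, both directions reduce to routine applications of the definitions.
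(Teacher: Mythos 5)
Your proof is correct and follows essentially the same contradiction-plus-subsequence strategy as the paper in both directions. The one place you diverge is for the better: in the forward implication the paper dismisses the possibility that the subsequence drifts toward $(y,x)$ with an unjustified ``clearly, the set \dots is nonempty,'' whereas you rule that case out explicitly by continuity (the ratio would tend to $(f(y)-f(x))/d(x,y)=-1$), and you also write down the open set $U$ that the paper leaves implicit before invoking the peak property.
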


\begin{proof}
Assume that $f$ peaks at $(x,y)$. Then 
$$
\frac{f(x)-f(y)}{d(x,y)}=1.
$$
In order to prove that $f$ satisfies the property (P), let $\{(x_n,y_n)\}$ be a sequence in $\widetilde{X}$ such that 
$$
\left\{\frac{f(x_n)-f(y_n)}{d(x_n,y_n)}\right\}\to 1.
$$
If the conclusion of the property (P) is not satisfied, we could find a real number $\varepsilon>0$ and subsequences $\{x_{\sigma(n)}\}$ and $\{y_{\sigma(n)}\}$ of $\{x_n\}$ and $\{y_n\}$, respectively, satisfying that $d(x_{\sigma(n)},x)\geq\varepsilon$ for all $n\in\mathbb{N}$ or $d(y_{\sigma(n)},y)\geq\varepsilon$ for all $n\in\mathbb{N}$. Clearly, the set 
$$
\left\{n\in\mathbb{N}\colon d(x_{\sigma(n)},y)\geq\varepsilon\right\}\cup\left\{n\in\mathbb{N}\colon d(y_{\sigma(n)},x)\geq\varepsilon\right\}
$$
is nonempty. 
Taking subsequences of $\{x_{\sigma(n)}\}$ and $\{y_{\sigma(n)}\}$, we can suppose that $d(x_{\sigma(n)},y)\geq\varepsilon$ for all $n\in\mathbb{N}$ or $d(y_{\sigma(n)},x)\geq\varepsilon$ for all $n\in\mathbb{N}$. Since $f$ peaks at $(x,y)$, there exists $\delta>0$ such that 
$$
\frac{\left|f(x_{\sigma(n)})-f(y_{\sigma(n)})\right|}{d(x_{\sigma(n)},y_{\sigma(n)})}\leq 1-\delta
$$
for all $n\in\mathbb{N}$, and since 
$$
\left\{\frac{f(x_n)-f(y_n)}{d(x_n,y_n)}\right\}\to 1,
$$
we would arrive at a contradiction. This proves that $\{x_n\}\to x$ and $\{y_n\}\to y$.
 
Conversely, suppose that 
$$
\frac{f(x)-f(y)}{d(x,y)}=1
$$
and the property (P) is satisfied, but $f$ does not peak at $(x,y)$. Hence there exist $\varepsilon>0$ and a sequence $\{(x_n,y_n)\}$ in $\widetilde{X}$ satisfying that $\max\left\{d(x_n,x),d(y_n,y)\right\}\geq\varepsilon$ for all $n\in\mathbb{N}$ and $\max\left\{d(x_n,y),d(y_n,x)\right\}\geq\varepsilon$ for all $n\in\mathbb{N}$ such that 
$$
\frac{f(x_n)-f(y_n)}{d(x_n,y_n)}>1-\frac{1}{n}
$$
for all $n\in\mathbb{N}$. Since $\Lip(f)\leq 1$, it follows that  
$$
\left\{\frac{f(x_n)-f(y_n)}{d(x_n,y_n)}\right\}\to 1,
$$
but the sequence $\{(x_n,y_n)\}$ does not satisfy the conclusion of the property (P), a contradiction.
\end{proof}

We are now ready to prove our main result.

\begin{theorem}\label{t12}
Let $X$ and $Y$ be pointed metric spaces and let $\phi\colon Y\to X$ be a basepoint-preserving Lipschitz map. Assume that $X$ enjoys the peak property. Then $C_\phi\colon\Lipo(X)\to\Lipo(Y)$ is an isometry if and only if $\phi$ is nonexpansive and satisfies the property (M): for every point $(x,y)\in\widetilde{X}$, there exists a sequence $\{(x_n,y_n)\}$ in $\widetilde{Y}$ such that $\{\phi(x_n)\}\to x$, $\{\phi(y_n)\}\to y$ and  
$$
\left\{\frac{d_X(\phi(x_n),\phi(y_n))}{d_Y(x_n,y_n)}\right\}\to 1.
$$
\end{theorem}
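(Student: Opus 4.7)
The forward implication is precisely Theorem \ref{t11}, so the task is to prove the converse: assuming $X$ has the peak property and that $C_\phi$ is an isometry, one must verify that $\phi$ is nonexpansive and satisfies (M). Nonexpansiveness is already granted by Proposition \ref{prop-inicial}, so the plan focuses on extracting (M) from the isometry assumption, with the peak property providing the key test functions.

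The strategy is as follows. Fix an arbitrary $(x,y)\in\widetilde{X}$. By the peak property applied to $X$, I can pick $f\in\Lipo(X)$ with $\Lip(f)\leq 1$ that peaks at $(x,y)$, so in particular $(f(x)-f(y))/d_X(x,y)=1$, which forces $\Lip(f)=1$. Because $C_\phi$ is an isometry, $\Lip(f\circ\phi)=\Lip(f)=1$, so there is a sequence $\{(x_n,y_n)\}\subset\widetilde{Y}$ with
$$
\frac{|f(\phi(x_n))-f(\phi(y_n))|}{d_Y(x_n,y_n)}\longrightarrow 1.
$$
After possibly swapping $x_n\leftrightarrow y_n$ and passing to a subsequence, I may assume the absolute value can be dropped; and since the limit is $1>0$, for $n$ large we must have $\phi(x_n)\ne\phi(y_n)$, so $(\phi(x_n),\phi(y_n))\in\widetilde{X}$.

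The central observation is the factorization
$$
\frac{f(\phi(x_n))-f(\phi(y_n))}{d_Y(x_n,y_n)}=\frac{f(\phi(x_n))-f(\phi(y_n))}{d_X(\phi(x_n),\phi(y_n))}\cdot\frac{d_X(\phi(x_n),\phi(y_n))}{d_Y(x_n,y_n)}.
$$
Both factors on the right are bounded above by $1$ (the first because $\Lip(f)\le 1$, the second because $\phi$ is nonexpansive), and their product tends to $1$. An elementary squeezing argument then forces each factor to tend to $1$. The second factor tending to $1$ is exactly the numerical part of (M); it remains only to check that the chosen sequence satisfies $\phi(x_n)\to x$ and $\phi(y_n)\to y$.

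This last step is where the peak property really earns its keep, and it is the only subtle point in the argument. The first factor tends to $1$, which by Lemma \ref{lema1} (property (P) applied to the peaking function $f$) gives $\phi(x_n)\to x$ and $\phi(y_n)\to y$. Combined with the limit of the second factor, this produces the sequence required by property (M) at $(x,y)$. Since $(x,y)\in\widetilde{X}$ was arbitrary, (M) holds, completing the proof. The main obstacle, besides assembling these pieces in order, is the bookkeeping to guarantee that the denominator $d_X(\phi(x_n),\phi(y_n))$ is nonzero for large $n$, which is handled as noted above by invoking the strict positivity of the limiting ratio.
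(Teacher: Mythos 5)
Your proof is correct and follows essentially the same route as the paper's: a peaking function at $(x,y)$, the isometry giving $\Lip(f\circ\phi)=1$, a sequence nearly attaining that norm, the factorization of the difference quotient into two factors each bounded by $1$ whose product tends to $1$, and Lemma \ref{lema1} to recover $\phi(x_n)\to x$ and $\phi(y_n)\to y$. The only cosmetic difference is that you dispose of the sign ambiguity by swapping $x_n\leftrightarrow y_n$ at the outset, whereas the paper passes to subsequences at the end and treats the limit $-1$ case by exchanging the roles of the two sequences.
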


\begin{proof}
The sufficiency follows from Theorem \ref{t11}. To prove the necessity, assume that $C_\phi\colon\Lipo(X)\to\Lipo(Y)$ is an isometry. Then $\phi$ is nonexpansive by Proposition \ref{prop-inicial}. We now show that $\phi$ enjoys the property (M). Let $(x,y)\in\widetilde{X}$. Since $X$ has the peak property, there exists a function $f_{(x,y)}\in\Lipo(X)$ with $\Lip(f_{(x,y)})=1$ that peaks at $(x,y)$. Note that $\Lip(f\circ\phi)=\Lip(f)$ for all $f\in\Lipo(X)$. Since $\Lip(f_{(x,y)}\circ\phi)=1$, we can take a sequence $\{(x_n,y_n)\}$ in $\widetilde{Y}$ such that 
$$
\left\{\frac{\left|f_{(x,y)}(\phi(x_n))-f_{(x,y)}(\phi(y_n))\right|}{d_Y(x_n,y_n)}\right\}\to 1.
$$
Using that $\Lip(f_{(x,y)})=1$ and $\Lip(\phi)\leq 1$, we obtain 
$$
\left\{\frac{d_X(\phi(x_n),\phi(y_n))}{d_Y(x_n,y_n)}\right\}\to 1.
$$
An easy argument yields that 
$$
\left\{\frac{\left|f_{(x,y)}(\phi(x_n))-f_{(x,y)}(\phi(y_n))\right|}{d_X(\phi(x_n),\phi(y_n))}\right\}\to 1.
$$
Taking subsequences, we have that 
$$
\left\{\frac{f_{(x,y)}(\phi(x_{\sigma(n)})-f_{(x,y)}(\phi(y_{\sigma(n)}))}{d_X(\phi(x_{\sigma(n)}),\phi(y_{\sigma(n)}))}\right\}\to 1,
$$
or 
$$
\left\{\frac{f_{(x,y)}(\phi(x_{\sigma(n)}))-f_{(x,y)}(\phi(y_{\sigma(n)}))}{d_X(\phi(x_{\sigma(n)}),\phi(y_{\sigma(n)}))}\right\}\to -1.
$$
Applying Lemma \ref{lema1}, it follows that $\{\phi(x_{\sigma(n)})\}\to x$ and $\{\phi(y_{\sigma(n)})\}\to y$ in the first case, or $\{\phi(y_{\sigma(n)})\}\to x$ and $\{\phi(x_{\sigma(n)})\}\to y$ in the second one. This proves the theorem.
\end{proof} 

\begin{remark}
Our description of isometric composition operators on $\Lipo(X)$ is a Lipschitz version of a characterization of isometric composition operators on the Bloch space $\mathcal{B}$, obtained by M. Mart\'{i}n and D. Vukoti\'c \cite{MarVuk-07}. 

In view of another characterization of isometric composition operators on $\mathcal{B}$ stated by F. Colonna in \cite[Theorem 5]{Col-05}, it would be interesting to study under which conditions, the basepoint-preserving Lipschitz self-maps $\phi$ of $X$ inducing an isometric composition operator $C_\phi$ on $\Lipo(X)$ are precisely those having Lipschitz constant equal to one.
\end{remark} 

\textbf{Acknowledgements.} This research was partially supported by Junta de Andaluc\'{\i}a grant FQM194 and project UAL18-FQM-B018-A. We would like to thank Abraham Rueda for letting us know about Lemma \ref{lema1} and for us supplying with a copy of his papers. This note was written during the review \cite{Jim-19} of the monograph \cite{Wea-18} for Mathematical Reviews/MathSciNet.

\bibliographystyle{amsplain}

\end{document}